\title{Non-existence of multi-line Besicovitch sets}
\author{Tuomas Orponen}\email{tuomas.orponen@helsinki.fi}
\thanks{The author gratefully acknowledges the financial support of the Finnish National Doctoral Programme in Mathematics and its Applications.}
\subjclass[2010]{28A80 (Primary); 28A78, 42B25 (Secondary).}
\keywords{Besicovitch sets, Kakeya maximal operator}
\newcommand{\R}{\mathbb{R}}
\newcommand{\N}{\mathbb{N}}
\newcommand{\Z}{\mathbb{Z}}
\newcommand{\calT}{\mathcal{T}}
\newcommand{\calM}{\mathcal{M}}
\newcommand{\calR}{\mathcal{R}}
\newcommand{\calH}{\mathcal{H}}
\newcommand{\calL}{\mathcal{L}}
\newcommand{\spt}{\operatorname{spt}}
\newcommand{\diam}{\operatorname{diam}}
\newcommand{\m}{\mathfrak{m}}
\numberwithin{equation}{section}
\theoremstyle{plain}
\newtheorem{thm}[equation]{Theorem}
\newtheorem{lemma}[equation]{Lemma}
\newtheorem{proposition}[equation]{Proposition}
\theoremstyle{definition}
\newtheorem{definition}[equation]{Definition}
\theoremstyle{remark}
\newtheorem{remark}[equation]{Remark}
\begin{document}

\begin{abstract} If a compact set $K \subset \R^{2}$ contains a positive-dimensional family of line-segments in every direction, then $K$ has positive measure.
\end{abstract}

\maketitle

\section{Introduction} There are compact sets in the plane, which have zero Lebesgue measure, yet contain a line segment in every direction. Such a set was first constructed by A. S. Besicovitch in 1919, and this existence result is now one of the most widely known theorems in geometric measure theory, not least due to its profound consequences for Euclidean harmonic analysis. For a historical account of the problem and (some of) its connections, see \cite[\S 7]{Fa}.  In the present paper, we ask: what if a compact set contains \textbf{many} lines in every direction -- or even many directions? If the word 'many' is interpreted as in Theorems \ref{main} and \ref{main2}, the conclusion is that the set has to have positive Lebesgue measure. In other words, there exist no 'multi-line' Besicovitch sets. Our proof uses methods in harmonic analysis. More precisely, we extend Cordoba's proof \cite{Co} for the 'almost boundedness' of the Kakeya maximal operator from 1977. 

Given a direction $e \in S^{1}$ and a number $s \in [0,1]$, a family of line-segments $\calL$ perpendicular to $e$ is called \emph{s-dimensional}, if the union $L = \cup \calL \subset \R^{2}$ satisfies
\begin{displaymath} \calH^{s}(\rho_{e}(L)) > 0, \end{displaymath}
where $\rho_{e}$ stands for the orthogonal projection $\rho_{e}(x) = x \cdot e$, and $\calH^{s}$ is the $s$-dimensional Hausdorff measure, see \cite[Chapter 4]{Ma}. In case $s > 0$, the collection $\calL$ is \emph{positive-dimensional}. The definition imposes no conditions of measurability on $L$ or $\rho_{e}(L)$, even though we will actually need to know that the projections $\rho_{e}(L)$ are regular enough for Frostman's lemma to be applied. Fortunately, this is automatically satisfied, see Lemma \ref{measurability}. Our main result is:
\begin{thm}\label{main} Let $K \subset \R^{2}$ be a compact set containing the unions of positive-dimensional families of line-segments in $\calH^{1}$-positively many directions. Then $\calL^{2}(K) > 0$.
\end{thm}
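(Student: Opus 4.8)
The plan is to prove $\calL^2(K) > 0$ directly, by running Cordoba's $L^2$ intersection-counting argument from \cite{Co} with the tubes weighted by Frostman measures on the base-point sets of the line-families; the positive dimensionality is precisely what removes the logarithmic loss that appears in Cordoba's estimate. The first step is a string of routine pigeonholings --- over the dimension exponent $s$, over a lower bound for the lengths of the segments, over a bounded window for the position of the segments inside the compact set $K$, and over a lower bound for the Hausdorff content of the projected sets --- all legitimate by the countable and finite subadditivity of Hausdorff (content) measures. Combined with Lemma \ref{measurability} and Frostman's lemma, this reduces the hypothesis to the following: there are $s_0 \in (0,1]$, $\ell_0 > 0$, $C_0 < \infty$ and a set $E \subset S^1$ with $\calH^1(E) > 0$ such that, for every $e \in E$, the set $K$ contains a family $\calL_e$ of line-segments of length $\ell_0$ all perpendicular to $e$, whose positions $P_e := \rho_e(\bigcup \calL_e) \subset \R$ carry a probability measure $\mu_e$ satisfying $\mu_e(B(x,r)) \le C_0 r^{s_0}$ for all $x \in \R$ and $r > 0$, with $C_0$ independent of $e$.

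Next, fix a small $\delta > 0$, write $K_\delta$ for the open $\delta$-neighbourhood of $K$, let $E_\delta \subset E$ be a maximal $\delta$-separated set --- so $\# E_\delta \gtrsim \calH^1_\infty(E)/\delta$, and $\calH^1_\infty(E) > 0$ --- and for each $e \in E_\delta$ let $P_e^\delta \subset P_e$ be a maximal $\delta$-separated set; with $I_p := [p - \delta, p + \delta]$, maximality gives $\sum_{p \in P_e^\delta}\mu_e(I_p) \sim 1$. Write $T_{e,p} \subset K_\delta$ for the $\delta$-neighbourhood of the segment of $\calL_e$ at position $p$, a tube of dimensions $\sim \delta \times \ell_0$, and put
\[
 f := \sum_{e \in E_\delta}\ \sum_{p \in P_e^\delta} \mu_e(I_p)\, \chi_{T_{e,p}}, \qquad \spt f \subset K_\delta .
\]
Then $\int f = \sum_{e,p}\mu_e(I_p)\,|T_{e,p}| \sim \delta\ell_0 \sum_{e \in E_\delta}\sum_p \mu_e(I_p) \gtrsim \delta\ell_0\cdot\#E_\delta \gtrsim \calH^1_\infty(E)\,\ell_0$, while Cauchy--Schwarz gives $\big(\int_{K_\delta} f\big)^2 \le \calL^2(K_\delta)\,\|f\|_2^2$; hence everything is reduced to an upper bound for $\int f^2$ that is \emph{uniform in $\delta$}.

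Bounding $\int f^2 = \sum_{(e,p),(e',p')}\mu_e(I_p)\,\mu_{e'}(I_{p'})\,|T_{e,p}\cap T_{e',p'}|$ is the heart of the matter, and the step I expect to be the real obstacle. The diagonal part $e = e'$ is harmless: since $\mu_e(I_p) \lesssim \delta^{s_0}$, it is $\lesssim \sum_{e \in E_\delta}\delta^{s_0}\big(\sum_p\mu_e(I_p)\big)\,\delta\ell_0 \lesssim \ell_0\,\delta^{s_0} \to 0$. For $e \ne e'$ I would combine the elementary incidence bound $|T_{e,p}\cap T_{e',p'}| \lesssim \delta^2/\angle(e,e')$ (valid because $\angle(e,e') \ge \delta$ on the $\delta$-separated set $E_\delta$) with the observation that, for a fixed tube $T_{e,p}$, those $T_{e',p'}$ meeting it have their positions $p'$ confined to an interval of length $\lesssim \angle(e,e')$, so that the Frostman bound for $\mu_{e'}$ controls the relevant sum $\sum_{p'}\mu_{e'}(I_{p'})$ by $\lesssim \angle(e,e')^{s_0}$. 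This is where the positive dimensionality is spent: for a classical Besicovitch set (``$s_0 = 0$'') this last factor is only $O(1)$ and Cordoba's argument is then left with a genuine logarithm, whereas here summing over $e' \ne e$ yields, for fixed $e$,
\[
 \sum_{e' \in E_\delta \setminus \{e\}} \frac{\delta^2}{\angle(e,e')}\,\angle(e,e')^{s_0} \ \lesssim\ \delta^2 \sum_{m=1}^{\lceil 1/\delta\rceil}(m\delta)^{s_0-1} \ \lesssim_{s_0}\ \delta^2 \cdot \delta^{-1} \ =\ \delta ,
\]
the decisive point being that $s_0 - 1 \in (-1,0]$, so that $\sum_{m \le M} m^{s_0-1} \lesssim_{s_0} M^{s_0}$ rather than $\log M$. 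Summing over the $\lesssim 1/\delta$ directions $e \in E_\delta$ gives an off-diagonal contribution $\lesssim_{s_0} 1$, hence $\int f^2 \lesssim_{s_0} 1$ for all small $\delta$. Inserting this together with $\int f \gtrsim \calH^1_\infty(E)\,\ell_0$ into Cauchy--Schwarz produces a lower bound $\calL^2(K_\delta) \ge c_0 > 0$ with $c_0 = c_0(E, s_0, \ell_0)$ independent of $\delta$; since $K$ is compact, $\calL^2(K) = \lim_{\delta \to 0^+}\calL^2(K_\delta) \ge c_0 > 0$, which is the claim. The genuinely delicate point, and the only one requiring care, is to keep all the implicit constants --- above all the Frostman constant $C_0$ --- uniform in $e$ throughout the pigeonholing, and uniform in $\delta$ throughout the counting.
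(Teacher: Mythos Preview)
Your argument is correct, and at its core it is the same Cordoba-style $L^{2}$ computation as the paper's: the decisive estimate $\sum_{p'}\mu_{e'}(I_{p'}) \lesssim \angle(e,e')^{s_{0}}$ coming from the Frostman condition on the base-point sets is exactly the paper's bound $\int \mu^{e}\mu^{\xi}\,dx \lesssim |e-\xi|^{s-1}$, and your dyadic sum $\sum_{m}(m\delta)^{s_{0}-1}$ is the discretisation of the paper's $\int |e-\xi|^{s-1}\,d\sigma(\xi)$. The difference is only in packaging. The paper introduces a multi-line maximal operator $M^{\delta}$ on $S^{1}$, proves a restricted weak-type $(2,2)$ inequality for it with respect to an arbitrary Frostman measure $\sigma$ on the direction set, and then derives Theorem~\ref{main} as the special case of the sharper Theorem~\ref{main2} (where $\dim E_{0} > 1 - s$ suffices in place of $\calH^{1}(E_{0}) > 0$). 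Your direct test-function/Cauchy--Schwarz argument is more elementary and bypasses the maximal-operator machinery, at the price of not immediately giving Theorem~\ref{main2}; to recover the sharper statement you would simply replace the $\delta$-net $E_{\delta}$ and the counting bound $\#E_{\delta}\lesssim 1/\delta$ by integration against a $(1-t)$-Frostman measure on $E$, after which the convergence of $\iint |e-e'|^{s_{0}-1}\,d\sigma\,d\sigma$ for $t<s_{0}$ replaces your dyadic sum.
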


It is not assumed that the set with '$\calH^{1}$-positively many directions' is measurable. Theorem \ref{main} is a corollary of the following slightly sharper result:
\begin{thm}\label{main2} Let $0 < s \leq 1$, and let $K \subset \R^{2}$ be a compact set containing the unions of $s$-dimensional families of line-segments in a set of directions $E_{0} \subset S^{1}$ with $\dim E_{0} > 1 - s$. Then $\calL^{2}(K) > 0$.
\end{thm}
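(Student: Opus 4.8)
The plan is to run Córdoba's $L^2$ scheme \cite{Co} and to spend the extra information — that each admissible direction carries not one but an $s$-dimensional family of segments — on killing the logarithmic loss of Córdoba's original estimate. I would argue by contradiction: assume $\calL^2(K)=0$ and write $K_\delta$ for the closed $\delta$-neighbourhood of $K$; since $K$ is compact, $\calL^2(K_\delta)\downarrow 0$ as $\delta\downarrow 0$. I will construct, for each small $\delta>0$, a function $F_\delta\geq 0$ supported in $K_\delta$ with $\int F_\delta\gtrsim\delta$ and $\int F_\delta^2\lesssim\delta^2$, the implied constants being independent of $\delta$. Cauchy--Schwarz then forces $\calL^2(K_\delta)\geq(\int F_\delta)^2/\int F_\delta^2\gtrsim 1$, contradicting $\calL^2(K_\delta)\to 0$.

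Before building $F_\delta$ there is a preparatory step in which Lemma~\ref{measurability} and the compactness of $K$ are used to pass to uniform, genuinely Borel data. For $\ell>0$ let $P_{e,\ell}:=\rho_e\big(\{x\in K: x\text{ lies on a segment of length }\geq\ell,\text{ perpendicular to }e,\text{ contained in }K\}\big)$; a limiting argument shows that this set is compact, and since $\calH^s(\rho_e(L))>0$ forces $\calH^s(P_{e,1/n})>0$ for some $n$, a first pigeonholing over $\ell=1/n$, followed by a pigeonholing over the Frostman constants of the $P_{e,\ell}$ (using that the Hausdorff dimension of a countable union is the supremum of the dimensions), yields a fixed $\ell>0$, a fixed $C<\infty$, and a Borel set $E_0'\subset E_0$ with $\dim E_0'>1-s$ such that for every $e\in E_0'$ the compact set $P_e:=P_{e,\ell}$ supports a Borel probability measure $\mu_e$ with $\mu_e(B(t,r))\leq C r^s$. (The Borelness of the pigeonholing sets is where Lemma~\ref{measurability} — the Suslin regularity of the projections — together with the upper semicontinuity of Hausdorff content $\calH^s_\infty$ is genuinely needed.) Fixing $\beta$ with $1-s<\beta<\dim E_0'$, Frostman's lemma applied to the Borel set $E_0'$ provides a compactly supported probability measure $\nu$ with $\nu(B(e,r))\leq C r^\beta$; the point of $\beta>1-s$ is that the energy $I:=\iint|e-e'|^{s-1}\,d\nu(e)\,d\nu(e')$ is then finite.

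Now fix $\delta>0$ and discretise at scale $\delta$: take a maximal $\delta$-separated set $\{e_j\}_j$ in $\spt\nu$ and a Borel partition $\{D_j\}_j$ of $\spt\nu$ with $D_j\subset B(e_j,\delta)$; for each $j$ take a $\delta$-separated set $\{t^j_i\}_i$ in $\spt\mu_{e_j}$ and a Borel partition $\{E^j_i\}_i$ of $\spt\mu_{e_j}$ with $E^j_i\subset B(t^j_i,\delta)$; and for each pair $(j,i)$ fix a segment $S^j_i\subset K$, perpendicular to $e_j$, of length $\ell$, lying inside the line $\{x:x\cdot e_j=t^j_i\}$ (such $S^j_i$ exists by the definition of $P_{e_j}$, since segments perpendicular to $e_j$ are level sets of $\rho_{e_j}$). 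Letting $T^j_i\subset K_\delta$ be the $\delta$-neighbourhood of $S^j_i$, set $F_\delta:=\sum_{j,i}\nu(D_j)\mu_{e_j}(E^j_i)\,\chi_{T^j_i}$. Then $\spt F_\delta\subset K_\delta$, and since $|T^j_i|\gtrsim\delta$ while $\sum_{j,i}\nu(D_j)\mu_{e_j}(E^j_i)=1$, we get $\int F_\delta\gtrsim\delta$. Expanding $\int F_\delta^2$ as a weighted sum of $|T^j_i\cap T^{j'}_{i'}|$: the terms with $j=j'$ involve parallel tubes whose overlaps, controlled by the $s$-Frostman bound on $\mu_{e_j}$, contribute only $\lesssim\delta^{1+s+\beta}\lesssim\delta^2$ (valid as $\beta+s>1$). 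For $j\neq j'$, with $\theta_{jj'}:=|e_j-e_{j'}|\geq\delta$, I combine the standard tube estimate $|T^j_i\cap T^{j'}_{i'}|\lesssim\delta^2/\theta_{jj'}$ with the geometric fact — this is where straightness of the segments is used — that for fixed $j,i,j'$ the tubes $T^{j'}_{i'}$ meeting $T^j_i$ have their parameters $t^{j'}_{i'}$ confined to a single interval of length $\lesssim\theta_{jj'}$, namely the (slightly fattened) $\rho_{e_{j'}}$-image of $S^j_i$, so that $\sum_{i':\,T^{j'}_{i'}\cap T^j_i\neq\emptyset}\mu_{e_{j'}}(E^{j'}_{i'})\lesssim\theta_{jj'}^s$ by the $s$-Frostman bound. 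Multiplying these and summing the inner indices leaves the off-diagonal part of $\int F_\delta^2$ bounded by $\lesssim\delta^2\sum_{j\neq j'}\nu(D_j)\nu(D_{j'})\theta_{jj'}^{s-1}$; comparing this sum of the kernel $\theta\mapsto\theta^{s-1}$ over the points $e_j$ (weighted by $\nu(D_j)$) with the corresponding double integral — the near-diagonal range $\theta_{jj'}<2\delta$ being harmless since $\beta+s>1$ — gives $\lesssim\delta^2(I+1)\lesssim\delta^2$. Hence $\int F_\delta^2\lesssim\delta^2$, and the contradiction is complete.

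I expect the real work to lie in the preparatory step, not the estimate: one must check that the sets over which one pigeonholes (optimal length, optimal Frostman constant of the projection) are Borel — which is exactly where Lemma~\ref{measurability} and the upper semicontinuity of $\calH^s_\infty$ enter — and that the pigeonholings can be carried out so as to retain $\dim E_0'>1-s$; the compactness of $P_{e,\ell}$ and the harmlessness of replacing $L$ by the union of long segments also need a short argument. By contrast the analytic heart is brief: once one recognises $\theta\mapsto\theta^{s-1}$ as the operative kernel and its $\nu$-energy as finite precisely because $\dim E_0>1-s$, the two-line tube computation and the passage from the discrete sum to the energy integral are routine.
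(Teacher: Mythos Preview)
Your argument is correct and is, at the analytic level, the same as the paper's: both run C\'ordoba's $L^{2}$ scheme, both reduce to the tube--correlation bound $\lesssim |e-\xi|^{s-1}$ via the two ingredients you isolate (the $\delta^{2}/\theta$ intersection estimate and the $\theta^{s}$ bound coming from the $s$-Frostman condition on the projection), and both finish by observing that this kernel has finite energy against a Frostman measure on directions precisely because $\dim E_{0}>1-s$.

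The packaging differs in two respects worth recording. First, the paper wraps the estimate into a restricted weak-type $(2,2)$ inequality for a multi-line maximal operator $M^{\delta}$ on $S^{1}$, whereas you dualise directly via a single test function $F_{\delta}$ and Cauchy--Schwarz; these are equivalent formulations of the same inequality. Second, and more substantively, the paper sidesteps your ``preparatory step'' entirely: rather than pigeonholing upfront to a Borel $E_{0}'$ (which, as you note, requires checking that $e\mapsto \calH^{s}_{\infty}(P_{e,\ell})$ is upper semicontinuous so that the superlevel sets are closed), the paper observes that $M^{\delta}\chi_{K(\delta)}$ is lower semicontinuous, so the set $E^{\delta}=\{M^{\delta}\chi_{K(\delta)}>\mathfrak{m}\}$ is \emph{open} and contains $E_{0}$ (after the same length/content pigeonholing), and then applies Frostman to $E^{\delta}$ for each $\delta$ separately. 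This costs a $\delta$-dependent measure $\sigma^{\delta}$ on directions but buys a cleaner treatment of measurability; your approach fixes a single $\nu$ once and for all at the price of the semicontinuity check. Both routes are sound; neither is more general than the other.
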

Again, we require no regularity from $E$. A word on notation before we begin: we write $A \lesssim B$, if there exists a finite constant $C > 0$ such that $A \leq CB$. If $C$ is allowed to depend on a parameter, say, $p$, we may write $A \lesssim_{p} B$. The sequence $A \lesssim B \lesssim A$ is abbreviated to $A \asymp B$.
% The problem originated from the author's futile attempts to disprove the 'visibility conjecture'; until Theorem \ref{main} was established, the use of multi-line Besicovitch sets seemed like a promising direction to look for counter-examples

\section{Acknowledgements}

I am grateful to an anonymous referee for a careful reading of the manuscript and several useful comments.

\section{The multi-line maximal operator} Adopting Cordoba's approach in proving that ordinary Kakeya sets have dimension two, we first need to introduce a maximal operator suitable for our purposes. Our operator, defined on $S^{1}$ rather than $\R^{2}$, is similar to the modification of Cordoba's operator introduced by J. Bourgain in \cite{Bo}. If $e \in S^{1}$ and $\delta > 0$, we denote by $\calT_{e}^{\delta}$ the collection of disjoint $\delta$-tubes perpendicular to the line spanned by $e$. More precisely, if $\rho_{e} \colon \R^{2} \to \R$ is the orthogonal projection $\rho_{e}(x) = x \cdot e$, we set
\begin{displaymath} \calT_{e}^{\delta} := \{\rho_{e}^{-1}[j\delta,(j + 1)\delta) : j \in \Z\}. \end{displaymath}
Next, we introduce the family of $(\delta,e)$-rectangles, denoted by $\calR_{e}^{\delta}$. A rectangle $R \subset \R^{2}$ is a member of $\calR_{e}^{\delta}$, if $R$ is a $\delta \times 1$-rectangle, and $R \subset T$ for some $T \in \calT_{e}^{\delta}$. A set $B \subset \R^{2}$ is called a $(\delta,e)$-set, if $B \cap T \in \calR_{e}^{\delta}$ for every tube $T \in \calT_{e}^{\delta}$. Given $0 < s \leq 1$, a measure $\mu$ on $\R^{2}$ is called a $(\delta,e,s)$-measure, if $\mu$ is actually a function with the following properties:
\begin{itemize}
\item[(i)] there exists a $(\delta,e)$-set $B \subset \R^{2}$ such that
\begin{displaymath} \mu = \sum_{T \in \calT^{\delta}_{e}} a_{T} \cdot \chi_{B \cap T}, \end{displaymath}
\item[(ii)] the $L^{1}$-norm of $\mu$ is bounded by one,
\begin{displaymath} \delta \sum_{T \in \calT_{e}^{\delta}} a_{T} =  \|\mu\|_{L^{1}(\R^{2})} \leq 1, \end{displaymath}
\item[(iii)] the projection $\mu_{e} := \rho_{e\sharp}\mu$ satisfies the growth condition
\begin{displaymath} \mu_{e}(I) \leq \ell(I)^{s} \end{displaymath}
for every interval $I \subset \R$.
\end{itemize}
The parameter $s > 0$ will be thought as fixed, and the collection of all $(\delta,e,s)$-measures is simply denoted by $\calM_{e}^{\delta}$. 
\begin{definition}[Multi-line maximal operator] If $f \colon \R^{2} \to \R$ is a bounded Borel function, we set
\begin{displaymath} M^{\delta}f(e) := \sup \left\{ \int f \, d\mu : \mu \in \calM_{e}^{\delta} \right\}. \end{displaymath}
\end{definition}
\begin{remark} For a fixed bounded Borel function $f \colon \R^{2} \to \R$, the mapping $e \mapsto M^{\delta}f(e)$ is lower semicontinuous. Indeed, if $\mu^{e} \in \calM_{e}^{\delta}$, we may rotate $\mu^{e}$ to obtain measures $\mu^{\xi} \in \calM^{\delta}_{\xi}$, for $\xi \neq e$. As $\xi \to e$, the, difference $\int f \, d\mu^{e} - \int f \, d\mu^{\xi}$ tends to zero. As a consequence of the semicontinuity, the function $M^{\delta}f$ can be discretized by choosing a finite collection of vectors $\{e_{1},\ldots,e_{q}\} \subset S^{1}$, corresponding measures $\mu^{e_{j}} \in \calM_{e_{j}}^{\delta}$ and some numbers $\delta_{j} < \delta$ in such a manner that $S^{1}$ is covered by the balls $B(e_{j},\delta_{j})$, and
\begin{displaymath} M^{\delta} f(e) \asymp \sum_{j = 1}^{q} \left[ \int f\, d\mu^{e_{j}}\chi_{B(e_{j},\delta_{j})}(e) \right]. \end{displaymath}
In the following proofs, all measurability issues can be resolved by replacing $M^{\delta}f$ with the discretized version.
\end{remark}

\section{A restricted weak-type (2,2) bound for $M^{\delta}$}

Fix $s \in (0,1)$ for the rest of the paper. The central component in the proof of Theorem \ref{main2} is the following estimate:
\begin{proposition} Let $\sigma$ be a Borel measure on $S^{1}$ satisfying the bound $\sigma(B(e,r)) \leq r^{1 - t}$, $e \in S^{1}$, $r > 0$, for some $t < s$. Then, the maximal operator $M^{\delta}$ satisfies the weak-type $(2,2)$-estimate
\begin{equation}\label{weak} \sigma(\{e \in S^{1} : M^{\delta}\chi_{B}(e) \geq \lambda\})^{1/2} \lesssim_{t} \frac{\calL^{2}(B)^{1/2}}{\lambda}, \qquad \lambda > 0, \end{equation}
for compact sets $B \subset \R^{2}$, where the implicit constants are independent of $\delta > 0$.
\end{proposition}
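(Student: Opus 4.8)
The plan is to follow Cordoba's duality/$L^2$-orthogonality argument, adapted to the new maximal operator. First I would linearize: for each $e$ in the superlevel set $\Lambda_\lambda := \{e : M^\delta\chi_B(e) \geq \lambda\}$, pick a measure $\mu^e \in \calM_e^\delta$ with $\int \chi_B \, d\mu^e \gtrsim \lambda$, and—using the discretization from the Remark—reduce to a finite set $E = \{e_1,\dots,e_N\} \subset \Lambda_\lambda$ that is $\delta$-separated, with associated $(\delta,e_j)$-sets $B_j \subset B$ and coefficients $\{a_T\}_{T \in \calT_{e_j}^\delta}$. The Frostman bound on $\sigma$ gives $\sigma(\Lambda_\lambda) \lesssim N \delta^{1-t}$ up to the covering overhead, so it suffices to bound $N\delta^{1-t}$ by $\calL^2(B)/\lambda^2$.

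The core is to estimate the inner product $\sum_{i,j} \langle \mu^{e_i}, \mu^{e_j} \rangle_{L^2}$ from above, knowing $\sum_i \langle \mu^{e_i}, \chi_B\rangle \gtrsim N\lambda$ from below. Concretely, let $g := \sum_{i=1}^N \mu^{e_i}$; then $N\lambda \lesssim \int_B g \leq \calL^2(B)^{1/2}\|g\|_2$ by Cauchy--Schwarz (here $B$ compact, hence measurable, is used), so the whole problem is the bound $\|g\|_2^2 = \sum_{i,j}\langle\mu^{e_i},\mu^{e_j}\rangle \lesssim_t N \lambda \delta^{t-1}$, i.e. $\langle \mu^{e_i},\mu^{e_j}\rangle$ summed over $j$ is $\lesssim_t \delta^{t-1}$ for each fixed $i$ (using $\|\mu^{e_i}\|_1 \leq 1$ and $\lambda \lesssim 1$ to absorb one factor). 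For a single pair, I would exploit that $\mu^{e_i}$ is supported on tubes perpendicular to $e_i$ and $\mu^{e_j}$ on tubes perpendicular to $e_j$: a tube from each family meets in a parallelogram of area $\asymp \delta^2/\sin\angle(e_i,e_j) \asymp \delta^2/|e_i - e_j|$ (for $\delta$-separated directions), and on that parallelogram the relevant densities are the constants $a_T$ and $a_{T'}$. Summing the geometric bound $\int \chi_{T}\chi_{T'} \lesssim \delta^2/|e_i-e_j|$ against the coefficients, and using property (ii) ($\delta\sum a_T \leq 1$), yields $\langle \mu^{e_i},\mu^{e_j}\rangle \lesssim 1/|e_i - e_j|$ — but this crude bound is not summable over $\delta$-separated $e_j$. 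The point of property (iii) is to recover the gain: the projected mass $\mu^{e_i}_{e_i}$ obeys $\mu^{e_i}_{e_i}(I) \leq \ell(I)^s$, and a tube $T' \in \calT_{e_j}^\delta$, when projected onto the $e_i$-axis, covers an interval of length $\asymp |e_i - e_j|$; so the portion of $\mu^{e_i}$ that $T'$ can see is at most $|e_i-e_j|^s$ rather than $O(1)$. Tracking this carefully—essentially replacing the ``$1$'' above by $\min\{1, |e_i - e_j|^{s-1} \cdot \delta\text{-local mass}\}$, and similarly using (iii) for $\mu^{e_j}$—gives $\langle \mu^{e_i},\mu^{e_j}\rangle \lesssim |e_i - e_j|^{s-1}$ type behavior, which summed over a $\delta$-separated set of $e_j$ gives $\sum_j |e_i - e_j|^{s-1} \lesssim_s \delta^{s-1} \leq \delta^{t-1}$ since $t < s < 1$. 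That closes the estimate.

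The main obstacle I anticipate is the bilinear tube estimate with property (iii): one must bound $\int \mu^{e_i}\mu^{e_j}$ not just by the transversality factor $\delta^2/|e_i-e_j|$ times the coefficient product, but must simultaneously bring in \emph{both} Frostman-type growth conditions on the two projections, and the bookkeeping of how a fixed $(\delta,e_j)$-tube $T'$ partitions its intersection with the $(\delta,e_i)$-tubes of $\mu^{e_i}$ (there are $\asymp |e_i - e_j|/\delta$ of them, each contributing a parallelogram of area $\asymp \delta^2/|e_i-e_j|$, with total $\mu^{e_i}$-weight on them controlled by (iii) applied to the projection of $T'$). Getting the exponent of $|e_i-e_j|$ to be exactly $s-1$ (so that, combined with $t<s$, the sum over $\delta$-separated directions converges to $O_t(\delta^{s-1})$ and not something worse) is the delicate accounting; everything else—the linearization, the Cauchy--Schwarz step, the Frostman lemma applied to $\sigma$, and the discretization—is routine. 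I would also take a moment to check that the construction is uniform in $\delta$, since the final Proposition demands $\delta$-independent constants, and to confirm that the $\min$ with the trivial bound $\langle\mu^{e_i},\mu^{e_j}\rangle \le \|\mu^{e_i}\|_\infty\|\mu^{e_j}\|_1$ is harmless for the near-diagonal terms where $|e_i-e_j|\asymp\delta$.
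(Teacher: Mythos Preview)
Your bilinear estimate $\langle \mu^{e},\mu^{\xi}\rangle \lesssim |e-\xi|^{s-1}$ is the right target and essentially matches the paper's computation (the paper actually uses property~(iii) only for one of the two measures and property~(ii) for the other, which already yields $|e-\xi|^{s-1}$). The genuine gap is in the step that follows. Your claim
\[
\sum_{j} |e_i - e_j|^{s-1} \lesssim_s \delta^{s-1}
\]
for a $\delta$-separated family $\{e_j\}$ is false. If the $e_j$ sit at distances $\asymp k\delta$ from $e_i$ for $k=1,\dots,N$, then $\sum_j |e_i-e_j|^{s-1}\asymp \delta^{s-1}\sum_{k=1}^{N}k^{s-1}\asymp \delta^{s-1}N^{s}$, which for $N\asymp 1/\delta$ is $\asymp\delta^{-1}$, not $\delta^{s-1}$. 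Feeding this into your chain gives only $N^{1-s}\delta^{1-s}\lambda^{2}\lesssim \calL^{2}(B)$, and this does \emph{not} imply your stated target $N\delta^{1-t}\lambda^{2}\lesssim\calL^{2}(B)$ once $t>0$. The loss happens precisely when you replace $\sigma(\Lambda_\lambda)$ by the crude cover bound $N\delta^{1-t}$: this discards the Frostman structure of $\sigma$, which is exactly what is needed to make the kernel $|e-\xi|^{s-1}$ integrable.

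The paper avoids the discretization to $\delta$-separated points altogether and integrates directly against $\sigma$: with $E=\Lambda_\lambda$, Cauchy--Schwarz gives
\[
\lambda\,\sigma(E)\lesssim \calL^{2}(B)^{1/2}\left(\iint_{E\times E}\langle\mu^{e},\mu^{\xi}\rangle\,d\sigma(e)\,d\sigma(\xi)\right)^{1/2},
\]
and after the bilinear bound $\langle\mu^{e},\mu^{\xi}\rangle\lesssim |e-\xi|^{s-1}$ one uses the growth condition $\sigma(B(e,r))\le r^{1-t}$ with $t<s$ to obtain $\int_{S^{1}}|e-\xi|^{s-1}\,d\sigma(\xi)\lesssim_{t}1$, hence the double integral is $\lesssim_t\sigma(E)$ and the weak-type bound follows. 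If you want to keep a discretized picture, the fix is to retain the weights $w_j=\sigma(B(e_j,\delta))$ on the $e_j$ and bound $\sum_j w_j|e_i-e_j|^{s-1}$, which is then a Riemann sum for the convergent $\sigma$-integral; without those weights the argument does not close.
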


\begin{proof} Fix $\lambda > 0$ and write $E := \{e \in S^{1} : M^{\delta}_{s}\chi_{B}(e) \geq \lambda\}$. For each $e \in E$, choose a measure $\mu^{e} \in \calM^{\delta}_{e}$ with $\int_{B} d\mu^{e} \gtrsim \lambda$. Then, since the measures $\mu^{e}$ are functions, we may estimate as follows:
\begin{align} \lambda\sigma(E) & \lesssim \int_{E} \int_{B} d\mu^{e} \, d\sigma(e) = \int_{B} \int_{E} \mu^{e}(x) \, d\sigma(e) \, dx \notag\\
& \leq \calL^{2}(B)^{1/2} \left(\int \left(\int_{E} \mu^{e}(x) \, d\sigma(e) \right)^{2} \, dx \right)^{1/2}\notag \\
&\label{form2} = \calL^{2}(B)^{1/2}\left(\iint_{E \times E} \left[ \int \mu^{e}(x)\mu^{\xi}(x) \, dx \right] \, d\sigma(e) \, d\sigma(\xi) \right)^{1/2}. \end{align} 
So, it remains to bound the correlation
\begin{displaymath} \int \mu^{e}(x)\mu^{\xi}(x) \, dx. \end{displaymath}
We first write
\begin{displaymath} \int \mu^{e}(x)\mu^{\xi}(x) \, dx = \sum_{j \in \Z} a_{j}^{\xi}\int_{R_{j}^{\xi}}\mu^{e}(x) \, dx, \end{displaymath}
where $R_{j}^{\xi} \in \calR^{\delta}_{\xi}$ is some $(\delta,\xi)$-rectangle, on which $\mu^{\xi}$ takes the constant value $a_{j}^{\xi}$, see Figure \ref{fig1}.

\begin{figure}[ht!]
\begin{center}
\includegraphics[scale = 0.6]{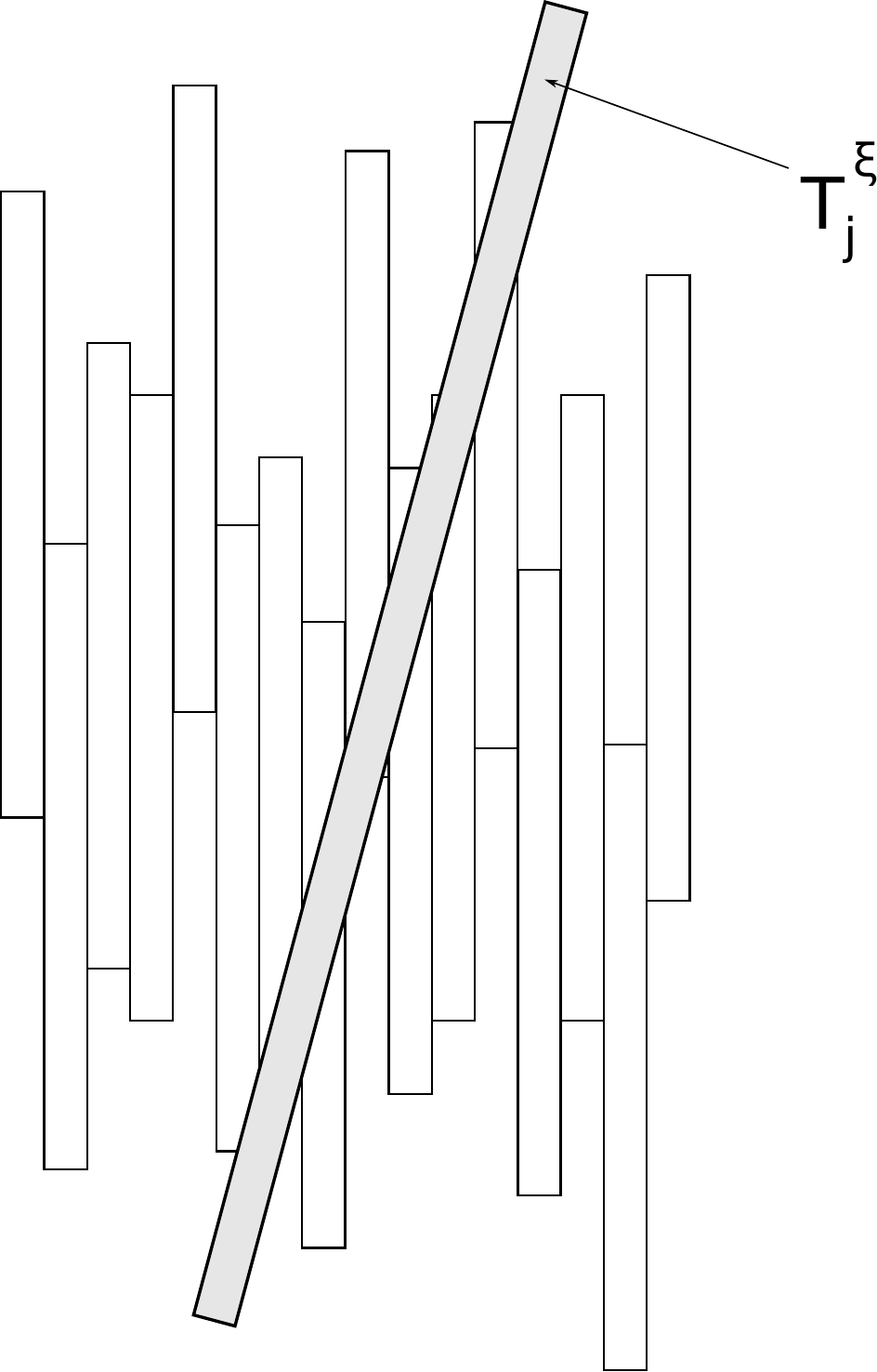}
\caption{The tube $T^{\xi}_{j} \supset R^{\xi}_{j}$ intersecting the rectangles $R^{e}_{i}$.}\label{fig1}
\end{center}
\end{figure}

Similarly expanding $\mu^{e}$ as a sum $\mu^{e} = \sum a_{i}^{e}\chi_{R_{i}^{e}}$, we have
\begin{displaymath} \int_{R_{j}^{\xi}} \mu^{e}(x) \, dx \leq \sum_{i : R_{i}^{e} \cap R_{j}^{\xi} \neq \emptyset} a_{i}^{e} \cdot \calL^{2}(R_{i}^{e} \cap R_{j}^{\xi}), \qquad j \in \Z. \end{displaymath}
The diameter of the intersection $R_{i}^{e} \cap R_{j}^{\xi}$ is bounded by 
\begin{displaymath} \diam(R_{i}^{e} \cap R_{j}^{\xi}) \lesssim \frac{\delta}{|e - \xi| + \delta}, \end{displaymath}
so $\calL^{2}(R_{i}^{e} \cap R_{j}^{\xi}) \lesssim \delta^{2}/(|e - \xi| + \delta)$. Finally, we have to estimate the sum of the numbers $a_{i}^{e}$ over the indices $\{i : R_{i}^{e} \cap R_{j}^{\xi} \neq \emptyset\}$. Using basic trigonometry, the projection of the rectangle $R_{j}^{\xi}$ onto the line spanned by $e$ is an interval $I_{e,\xi}$ of length 
\begin{displaymath} \ell(I_{e,\xi}) \lesssim |e - \xi| + \delta. \end{displaymath}
In particular, the rectangles $R_{i}^{e}$ with $R_{i}^{e} \cap R_{j}^{\xi} \neq \emptyset$ are all contained in the pre-image $\rho_{e}^{-1}(2I_{e,\xi})$. Combining this with assumption (iii),
\begin{align*} \sum_{i : R_{i}^{e} \cap R_{j}^{\xi}} a_{i}^{e} = \delta^{-1} \sum_{i : R_{i}^{e} \cap R_{j}^{\xi} \neq \emptyset} \int_{R_{i}^{e}} \mu^{e}(x) \, dx \leq \delta^{-1} (\mu^{e})_{e}(2I_{e,\xi}) \lesssim \frac{(|e - \xi| + \delta)^{s}}{\delta}. \end{align*} 
Putting everything together and using assumption (ii) yields
\begin{align*} \int \mu^{e}(x)\mu^{\xi}(x) \, dx & \lesssim \sum_{j \in \Z} \left( a_{j}^{\xi} \cdot \frac{\delta^{2}}{|e - \xi| + \delta} \cdot \frac{(|e - \xi| + \delta)^{s}}{\delta} \right)\\
& \leq |e - \xi|^{s - 1} \sum_{j \in \Z} \delta \cdot a_{j}^{\xi} \leq |e - \xi|^{s - 1}. \end{align*}
Inserting this back into \eqref{form2} leads to
\begin{displaymath} \lambda\sigma(E) \lesssim \calL^{2}(B)^{1/2} \left(\int_{E} \left[\int_{S^{1}} |e - \xi|^{s - 1} \, \sigma(\xi) \right] d\sigma(e) \right)^{1/2} \lesssim \calL^{2}(B)^{1/2} \sigma(E)^{1/2}. \end{displaymath}
This concludes the proof of \eqref{weak}. The growth bound assumed from $\sigma$ was used above to obtain
\begin{displaymath} \int_{S^{1}} |e -\xi|^{s - 1} \, d\sigma(e) \lesssim_{t} 1. \end{displaymath} 
The proof of this is standard issue, see for example \cite[p. 109]{Ma}. 
\end{proof}

The following lemma addresses the measurability issues related to the projections $\rho_{e}(L)$, mentioned at the beginning of the introduction.

\begin{lemma}\label{measurability} Let $K \subset \R^{2}$ be a compact set, let $e \in S^{1}$ and let $c > 0$. Let $\calL$ be the collection of all line-segments contained in $K$, which are perpendicular to $e$ and have length at least $c$. Then $L := \cup \calL$ is compact; in particular, $\rho_{e}(L)$ is compact.
\end{lemma}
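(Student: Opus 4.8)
The plan is to reduce everything to showing that $L$ is closed: once this is known, $L$ is a closed subset of the bounded set $K$, hence compact, and then $\rho_{e}(L)$ is compact as the continuous image of a compact set. If $\calL = \emptyset$ the claim is trivial, so I assume $\calL \neq \emptyset$. Fixing a unit vector $e^{\perp}$ orthogonal to $e$, every segment $S \in \calL$ can be written as $S = \{p_{S} + t e^{\perp} : 0 \leq t \leq \ell_{S}\}$ with $p_{S} \in K$ an endpoint and length $\ell_{S} \in [c, \diam K]$, the upper bound holding because $S \subseteq K$. The point of working with a single fixed perpendicular direction is that this parametrisation is uniform across all of $\calL$; this is exactly what makes the compactness argument go through, whereas for a family of segments with varying directions one would also have to keep track of the directions.

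Next I would take an arbitrary sequence $x_{n} \in L$ with $x_{n} \to x \in \R^{2}$ and choose $S_{n} \in \calL$ with $x_{n} \in S_{n}$, say $S_{n} = \{p_{n} + t e^{\perp} : 0 \leq t \leq \ell_{n}\}$ and $x_{n} = p_{n} + t_{n} e^{\perp}$ with $t_{n} \in [0, \ell_{n}]$. Since $p_{n} \in K$ is bounded, $\ell_{n} \in [c, \diam K]$ and $t_{n} \in [0, \diam K]$, I may pass to a subsequence along which $p_{n} \to p$, $\ell_{n} \to \ell \in [c, \diam K]$ and $t_{n} \to t \in [0, \ell]$.

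It then remains to identify the limit. Set $S := \{p + r e^{\perp} : 0 \leq r \leq \ell\}$; this segment is perpendicular to $e$ and has length $\ell \geq c$. For each fixed $r \in [0,\ell]$ one has $p_{n} + \min\{r, \ell_{n}\} e^{\perp} \in S_{n} \subseteq K$, and $\min\{r, \ell_{n}\} \to \min\{r, \ell\} = r$, so, $K$ being closed, $p + r e^{\perp} \in K$. Hence $S \subseteq K$, and therefore $S \in \calL$. Finally $x_{n} = p_{n} + t_{n} e^{\perp} \to p + t e^{\perp}$, and since also $x_{n} \to x$, we get $x = p + t e^{\perp} \in S \subseteq L$. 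Thus $L$ is closed, which completes the proof.

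There is no genuine obstacle here; the only points that require a little care are the two bookkeeping facts used above — that the lower length bound $\ell_{n} \geq c$ survives in the limit (so that the limiting segment $S$ is still a member of $\calL$), and that the parameter $t_{n}$ locating $x_{n}$ inside $S_{n}$ must be carried along the subsequence, so that the limit point $x$ is certified to lie on the limiting segment $S$ and not merely in the closure of $L$.
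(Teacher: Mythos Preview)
Your proof is correct and takes a genuinely different route from the paper's. The paper argues abstractly: it invokes the Blaschke selection theorem to extract a subsequence of the segments $\ell_{i}$ converging in the Hausdorff metric to some compact set $\ell \subset K$, then cites a general result that Hausdorff limits of connected sets are connected to conclude that $\ell$ is a segment, and finally checks that $\ell$ has length at least $c$. You instead exploit the fact that all segments share the fixed direction $e^{\perp}$ to parametrise them uniformly by a pair (endpoint, length) in the compact set $K \times [c,\diam K]$, and then apply Bolzano--Weierstrass directly to these finite-dimensional parameters together with the location parameter $t_{n}$. Your argument is more elementary and entirely self-contained, avoiding the two cited theorems; the paper's argument is softer and would adapt without change to, say, families of connected compact subsets of $K$ with diameter at least $c$, where no convenient finite-dimensional parametrisation is available.
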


\begin{proof} Of course, we only need to verify that $L$ is closed. Fix $x \in \overline{L}$. We first pick a sequence of points $(x_{i})_{i \in \N}$ in $L$ with $x_{i} \to x$, and note that each point $x_{i}$ is contained in some line-segment $\ell_{i} \in \calL$. We then use the Blaschke selection theorem \cite[Theorem 3.16]{Fa} to produce a subsequence $(\ell_{i_{j}})_{j \in \N}$, convergent in the Hausdorff metric to a compact set $\ell \subset K$. It is clear from the definition of convergence in the Hausdorff metric that $x \in \ell$, and $\ell$ is a subset of the line $\rho_{e}^{-1}\{t\}$, where $t := \rho_{e}(x)$. Moreover, $\ell$ is connected according to \cite[Theorem 3.18]{Fa}. Thus, $\ell$ is either a line-segment or a point contained in $K \cap \rho_{e}^{-1}\{t\}$. Finally, we observe that $\ell$ has length at least $c$, since the $\delta$-neighborhoods of $\ell$ contain some line-segments $\ell_{i}$ for any $\delta > 0$. We conclude that $\ell \in \calL$, and so $x \in \ell \subset L$.
\end{proof} 

Now we are equipped to prove Theorem \ref{main2}

\begin{proof}[Proof of Theorem \ref{main2}]  Let $K \subset \R^{2}$ be as in Theorem \ref{main2}. Let $\calL^{e}_{c}$ be the collection of all line-segments contained in $K$, perpendicular to $e$ and with length at least $c > 0$. Write $L^{e} := \cup \calL^{e}_{c}$. Choosing $c > 0$ and $\alpha > 0$ small enough, the set $E = \{e \in S^{1} : H^{s}(\rho_{e}(L^{e})) \geq \alpha\}$ has dimension $\dim E > s - 1$, where $H^{s}$ stands for $s$-dimensional Hausdorff content. Without loss of generality, we may assume that $c = 1$. Thus, $\calL^{e} := \calL^{e}_{1}$ consists of line-segments of length at least one, and $\rho_{e}(L^{e})$ is a compact set according to Lemma \ref{measurability}. We can now apply Frostman's lemma to the sets $\rho_{e}(L^{e})$: it follows from the standard proof of this lemma, see \cite[Theorem 8.8]{Ma}, that for every $e \in E$ we can locate a measure $\tilde{\nu}^{e}$, supported on $\rho^{e}(L^{e})$, such that $\tilde{\nu}^{e}(I) \leq \ell(I)^{s}$ for every interval $I \subset \R$, and $1 \lesssim \tilde{\nu}^{e}(\R) \leq 1$, where the implicit constants only depend on $\alpha$ and $s$. To produce from $\tilde{\nu}^{e}$ a measure in $\calM_{e}^{\delta}$, we first discretize $\tilde{\nu}^{e}$ by defining
\begin{displaymath} \nu^{e} = \frac{1}{10}\sum_{j \in \Z} \frac{\tilde{\nu}^{e}[j\delta,(j + 1)\delta)}{\delta}\chi_{[j\delta,(j + 1)\delta)}. \end{displaymath}
Then $\nu^{e}$ is a measure with total mass $\nu^{e}(\R) = \tilde{\nu}^{e}(\R)/10 \gtrsim 1$. The factor $1/10$ is there only to ensure that $\nu^{e}$ satisfies the growth condition $\nu^{e}(I) \leq \ell(I)^{s}$. Whenever $\nu^{e}[j\delta,(j + 1)\delta) = \tilde{\nu}^{e}[j\delta,(j + 1)\delta) > 0$, we know that the intersection $\spt \tilde{\nu}^{e} \cap [j\delta,(j + 1)\delta]$ is non-empty. Recalling the definition of $\tilde{\nu}^{e}$, this means that the intersection $\rho_{e}^{-1}[j\delta,(j + 1)\delta] \cap K$ contains an entire unit line-segment. It follows that we may find a rectangle $R^{j} \in \calR^{\delta}_{e}$ contained in the intersection $\rho_{e}^{-1}[j\delta,(j + 1)\delta) \cap K(\delta)$. For each $j$, we choose one -- and only one -- such rectangle $R^{j}$ and define
\begin{displaymath} \mu^{e} = \sum_{j \in \Z} \frac{\nu^{e}[j\delta,(j + 1)\delta)}{\delta}\chi_{R^{j}}. \end{displaymath}
Then the projection $(\mu^{e})_{e}$ coincides with $\nu^{e}$ and, consequently, satisfies the growth condition in assumption (iii); the assumptions (i) and (ii) are clearly satisfied as well. We conclude that $\mu^{e} \in \calM^{\delta}_{e}$, whence 
\begin{displaymath} M^{\delta}\chi_{K(\delta)}(e) \geq \int_{K(\delta)} \, d\mu^{e} = \mu^{e}(\R^{2}) \gtrsim 1. \end{displaymath}
This holds for every direction $e \in E$, so there exists a constant $\m > 0$ such that the sets
\begin{displaymath} E^{\delta} := \{e \in S^{1} : M^{\delta}\chi_{K(\delta)}(e) > \m\} \end{displaymath}
contain $E$, for every $\delta > 0$. In particular, we may find $t < s$ such that the numbers $\calH^{1 - t}(E^{\delta})$ have a uniform lower bound $\beta > 0$. The sets $E^{\delta}$ are open, so another application of (the proof of) Frostman's lemma yields measures $\sigma^{\delta}$, $\delta > 0$, supported on $E^{\delta}$ and satisfying the bounds $\sigma^{\delta}(E^{\delta}) \gtrsim 1$ and $\sigma^{\delta}(B(e,r)) \leq r^{1 - t}$, where the implicit constants are again independent of $\delta > 0$. It remains to apply the weak-type estimate \eqref{weak} as follows:
\begin{displaymath} 1 \lesssim \sigma^{\delta}(E^{\delta}) = \sigma^{\delta}(\{e \in S^{1} : M^{\delta}\chi_{K(\delta)}(e) > \m\}) \lesssim \frac{[\calL^{2}(K(\delta))]^{2}}{c^{2}}. \end{displaymath}
Letting $\delta \to 0$ shows that $\calL^{2}(K) > 0$ and completes the proof of Theorem \ref{main2}. \end{proof}


\begin{thebibliography}{9999}
\bibitem[Bo]{Bo} \textsc{J. Bourgain}: \emph{Besicovitch type maximal operators and applications to Fourier analysis}, Geom. Funct. Anal. \textbf{1}, No. 2 (1991), pp. 147--187
\bibitem[Co]{Co} \textsc{A. Cordoba}: \emph{The Kakeya maximal function and spherical summation multipliers}, Amer. J. Math. \textbf{99} (1977), pp. 1-22
\bibitem[Fa]{Fa} \textsc{K. Falconer}: \emph{The geometry of fractal sets}, Cambridge University Press, 1985
\bibitem[Ma]{Ma} \textsc{P. Mattila}: \emph{Geometry of Sets and Measures in Euclidean Spaces}, Cambridge University Press, 1995
\end{thebibliography}
\end{document}